\documentclass[10pt,reqno]{amsart}

\usepackage[dvips]{graphicx}
\usepackage{enumerate}
\usepackage{amsmath,amssymb,amsthm}
\usepackage[usenames]{color}

\theoremstyle{plain}
\newtheorem{thm}{Theorem}[section]
\newtheorem{cor}[thm]{Corollary}
\newtheorem{lem}[thm]{Lemma}
\newtheorem{prop}[thm]{Proposition}
\theoremstyle{definition}
\newtheorem{defn}[thm]{Definition}
\newtheorem{rem}[thm]{Remark}
\newtheorem{rems}[thm]{Remarks}

\def\ZZ{\mathbb Z}

\begin{document}
\title{The  continued fractions ladder of $(\sqrt[3]{m},\sqrt[3]{m^2})$}
\author{Mitja Lakner, Peter Petek, Marjeta \v Skapin Rugelj  }
\thanks{ M. Lakner,  M. \v{S}kapin Rugelj: University of Ljubljana, Faculty of Civil and Geodetic Engineering,
Jamova 2, 1000 Ljubljana, Slovenia.\\
P. Petek: University of Ljubljana, Faculty of Education,
Kardeljeva plo\v{s}\v{c}ad 16, 1000 Ljubljana, Slovenia.\\ \textit{ E-mails:}
mlakner@fgg.uni-lj.si, Peter.Petek@guest.arnes.si,
mskapin@fgg.uni-lj.si.}
\date{\today}

\maketitle

\begin{abstract}
Quadratic irrationals posses a periodic continued fraction expansion. Much
less is known about cubic irrationals.
We do not even know if the partial
quotients are bounded, even though extensive computations suggest they might
follow Kuzmin's probability law.
Results are given for sequences of
partial quotients of
$\sqrt[3]{m}$ and $\sqrt[3]{m^2}$ with $m$ noncube. A big partial quotient in one
sequence finds a connection in the other.
\end{abstract}

\section{Introduction}

Several authors have considered simultaneous rational approximations
to pairs of irrationals
$(\alpha, \beta)$  \cite{Be, Ch, WWD} and new generalized concepts were
developed \cite{BrP}. Here however we observe a different
parallelism for the specific pair $(\sqrt[3]{m}, \sqrt[3]{m^2})$ and the usual continued
fraction expansion.
Our long term goal is the proof of the\\[2mm]
 \noindent{\bf Hypothesis}. The partial quotients in the continued fraction
expansion of $\sqrt[3]{2}$ are unbounded.\\

Already in \cite{H} this question is asked for algebraic numbers of degrees
higher than 2.

A starting point for our present observation could be the long tables of partial
quotients for $(\sqrt[3]{2},\sqrt[3]{4})$ (among other algebraic irrationals) in \cite{LT},
where  the bigger ones are singled out.
In Figure \ref{fig1} we notice parallel apparition of big partial quotients in both the
continued fraction expansion sequences.

\begin{figure}[!htbp]
\centering
\includegraphics[width=130mm]{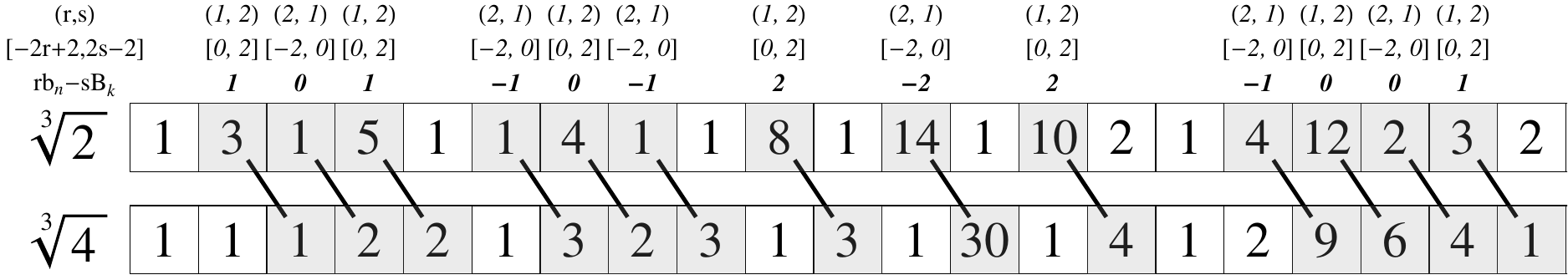}
\caption{Continued fraction ladder of  $(\sqrt[3]{2},\sqrt[3]{4})$}\label{fig1}
\end{figure}

A relatively big partial quotient in one sequence is connected to
roughly half that quotient in the other sequence. These relations
are formalized and analyzed in the sequel. "Big" here can be as small as $2$.

\begin{rems}
Numerous numerical experiments have been carried out to support
the Kuzmin statistics, giving the probability of a certain partial quotient
at $P(b_n=k)=\log_2{(k+1)^2\over k(k+2)}$ \cite{RDM, LT, Bru}.

It is also of interest that cubic irrationals appear in physics in studying
chaotic and quasiperiodic motions \cite{MH, CSG}.
\end{rems}

Let $\xi$ be any real number and we start the continued fraction process by
saying $\xi_0=\xi$, taking integer part $b_0=\lfloor \xi\rfloor$ and setting
$$\xi_1={1\over \xi_0-b_0},\;\; b_1=\lfloor\xi_1\rfloor$$
only to continue as long as
we can in the same fashion
\begin{equation}\label{ksi}
\xi_n={1\over \xi_{n-1}-b_{n-1}},\;\; b_n=\lfloor\xi_n\rfloor .
\end{equation}
The process eventually stops for a rational $\xi={a\over b}$
and continues indefinitely for irrational one.

We reap the approximations, convergents ${p_n\over q_n}$ to $\xi$, defined
by
$$p_{-1}=1,\; q_{-1}=0,\; p_0=b_0,\;q_0=1$$
$$p_n= b_np_{n-1}+p_{n-2},\;\; q_n=b_nq_{n-1}+q_{n-2}$$

We shall need some elementary results from \cite{P} or \cite {H}
and we quote them here.

\begin{itemize}
\item Let us have an irrational $\xi=\xi_0$ and the following complete
quotients are denoted by $\xi_n$, the corresponding convergents by
${p_n\over q_n}$, then we can express:
\begin{equation}\label{copml_quot}
\xi_n=-{p_{n-2}-q_{n-2}\xi \over p_{n-1}-q_{n-1}\xi }.
\end{equation}
\item Even convergents are smaller than the irrational and odd ones are
greater
\begin{equation}\label{sod_lih}
{p_{2j}\over q_{2j}}<\xi<{p_{2j+1}\over q_{2j+1}}
\end{equation}
 and
\begin{equation}\label{pqpq}
p_n q_{n-1}-p_{n-1}q_n=(-1)^{n-1}
\end{equation}
yielding the fact, that the
convergents are in their lowest terms.
\item ${p_n\over q_n}=\xi+(-1)^{n-1}{|\delta|\over q_n^2}$ with $|\delta|<1$
and if $b=b_{n+1}$ be the next partial quotient there is an
estimate ${1\over b+2}<|\delta|<{1\over b}$.
\item For any rational ${p\over q}$ there is a sufficient condition $|\delta|<{1\over 2}$
to be one of the convergents where $\delta=(\frac{p}{q}-\xi)q^2$.
\item Let $\frac{p_n}{q_n}$ and $\frac{p_N}{q_N}$ be two convergents of $\xi$. Then the following
statements are equivalent
\begin{itemize}
\item[$\ast$] $n < N$
\item[$\ast$] $p_n < p_N$
\item[$\ast$] $q_n < Q_N$
\item[$\ast$] $|\frac{p_n}{q_n}-\xi|>|\frac{p_N}{q_N}-\xi|$
\end{itemize}
The last inequality is a consequence of (\ref{copml_quot}).
\item The sequence of "relative errors" $|1- \frac{\xi}{p_n/q_n}|$ is decreasing.
\begin{eqnarray*}
1< |\xi_n|\stackrel{(\ref{copml_quot})}{=} \left|-{p_{n-2}-q_{n-2}\xi \over p_{n-1}-q_{n-1}\xi } \right|
& = & \frac{ \left|1- \frac{\xi}{p_{n-2}/q_{n-2}}\right| p_{n-2}}{ \left|1- \frac{\xi}{p_{n-1}/q_{n-1}}\right| p_{n-1}}\\
& < & \frac{ \left|1- \frac{\xi}{p_{n-2}/q_{n-2}}\right|}{ \left|1- \frac{\xi}{p_{n-1}/q_{n-1}}\right|}
\end{eqnarray*}

\end{itemize}
Here all fractions are in their lowest terms.

\section{Relations in the sequences of convergents}

Let $m$ be a positive integer, noncube. Then we can express the cubic
roots $\xi=\sqrt[3]{m}$  and   $\eta=\sqrt[3]{m^2}$ as infinite continued fractions.
The triplets
$({p_{n-1}\over q_{n-1}}, \xi_n, b_n)$ and $({P_{k-1}\over
Q_{k-1}},\eta_k,  B_k)$ denotes convergents, complete quotients and partial quotients
respectively.

\begin{defn}\label{connected} We say that the triplets
$({p_{n-1}\over q_{n-1}}, \xi_n, b_n)$ and $({P_{k-1}\over
Q_{k-1}},\eta_k,  B_k)$ are \emph{connected}, if
$$
{p_{n-1}\over q_{n-1}} \cdot {P_{k-1}\over Q_{k-1}}=m.
$$
We call the triplets together with connections \emph{ladder} of $(\sqrt[3]{m},\sqrt[3]{m^2})$ (see Figure \ref{fig1} and \ref{fig2}).

\end{defn}

We will need the following identities
\begin{lem}\label{zveza}
\begin{eqnarray*}
 m \frac{q}{p}-\sqrt[3]{m^2} &=& - \frac{q}{p} \sqrt[3]{m^2} \left(  \frac{p}{q}-\sqrt[3]{m} \right) \\
\left( m \frac{q}{p}-\sqrt[3]{m^2} \right) p^2 &=& - \frac{p}{q} \sqrt[3]{m^2} \left(  \frac{p}{q}-\sqrt[3]{m} \right) q^2
\end{eqnarray*}
\end{lem}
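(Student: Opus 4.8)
The plan is to prove both identities by direct algebraic manipulation, since they are purely formal statements involving the substitution $\sqrt[3]{m^2}=(\sqrt[3]{m})^2$ and $\sqrt[3]{m}\cdot\sqrt[3]{m^2}=m$. The main observation is that these are not really statements about convergents at all---the letters $p,q$ can be any nonzero reals---so no continued fraction theory is needed; only the relation $\sqrt[3]{m}\cdot\sqrt[3]{m^2}=m$ matters.

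First I would establish the top identity. Starting from the right-hand side, I would expand
\[
-\frac{q}{p}\sqrt[3]{m^2}\left(\frac{p}{q}-\sqrt[3]{m}\right)
=-\sqrt[3]{m^2}+\frac{q}{p}\sqrt[3]{m^2}\sqrt[3]{m}.
\]
Using $\sqrt[3]{m^2}\cdot\sqrt[3]{m}=\sqrt[3]{m^3}=m$, the second term collapses to $m\,\frac{q}{p}$, giving exactly $m\frac{q}{p}-\sqrt[3]{m^2}$, which is the left-hand side. This is the crux of the computation and it is entirely routine.

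Second, I would obtain the lower identity from the upper one simply by multiplying both sides by $p^2$. On the left this yields $\left(m\frac{q}{p}-\sqrt[3]{m^2}\right)p^2$ verbatim. On the right it yields $-\frac{q}{p}\sqrt[3]{m^2}\left(\frac{p}{q}-\sqrt[3]{m}\right)p^2$, and I would rewrite the scalar factor $p^2$ as $\frac{p}{q}\cdot q^2\cdot\frac{q}{p}\cdot\frac{p}{q}$---more directly, note $\frac{q}{p}\cdot p^2=\frac{p}{q}\cdot q^2$ since both equal $pq$---so that the prefactor $-\frac{q}{p}\sqrt[3]{m^2}$ together with one power of $p^2$ regroups as $-\frac{p}{q}\sqrt[3]{m^2}\cdot q^2$, matching the stated right-hand side.

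There is essentially no obstacle here; the only thing to be careful about is the bookkeeping of the powers of $p$ and $q$ in the second identity, making sure the factor $pq$ is distributed so that the final form reads $-\frac{p}{q}\sqrt[3]{m^2}\left(\frac{p}{q}-\sqrt[3]{m}\right)q^2$ rather than some algebraically equal but differently grouped expression. Since the lemma is a formal preparatory identity whose purpose (via Definition \ref{connected}, where $\frac{p_{n-1}}{q_{n-1}}\cdot\frac{P_{k-1}}{Q_{k-1}}=m$) is to relate the approximation error of $\frac{p}{q}$ to $\sqrt[3]{m}$ with that of $m\frac{q}{p}$ to $\sqrt[3]{m^2}$, the regrouping in the second line is exactly the form that will be convenient downstream, so I would present it in precisely that shape.
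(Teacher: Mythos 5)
Your proof is correct: the paper states Lemma \ref{zveza} without any proof, treating both identities as immediate consequences of $\sqrt[3]{m}\cdot\sqrt[3]{m^2}=m$, and your routine expansion of the right-hand side plus the observation $\frac{q}{p}p^2=pq=\frac{p}{q}q^2$ is exactly the verification the authors leave implicit. Nothing further is needed.
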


\begin{prop}
Two different connections do not intersect.
\end{prop}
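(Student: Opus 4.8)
The plan is to show that the correspondence $n\mapsto k$ furnished by a connection is strictly order-preserving; geometrically this is exactly the statement that two rungs of the ladder never cross. Unwinding Definition \ref{connected}, a connection pairs the convergent $\frac{p_{n-1}}{q_{n-1}}$ of $\xi=\sqrt[3]{m}$ with the convergent $\frac{P_{k-1}}{Q_{k-1}}=m\frac{q_{n-1}}{p_{n-1}}$ of $\eta=\sqrt[3]{m^2}$. Since the rational value $m\frac{q_{n-1}}{p_{n-1}}$ determines $\frac{P_{k-1}}{Q_{k-1}}$ uniquely (and conversely), each index on one side is matched to at most one index on the other, so the connections form a partial matching and ``non-intersecting'' is meaningful: I must rule out the configuration $n_1<n_2$ together with $k_1>k_2$.

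First I would convert the defining product into a statement about approximation errors. Applying Lemma \ref{zveza} with $\frac{p}{q}=\frac{p_{n-1}}{q_{n-1}}$ gives
$$\left|\frac{P_{k-1}}{Q_{k-1}}-\eta\right|=\left|m\frac{q_{n-1}}{p_{n-1}}-\sqrt[3]{m^2}\right|=\frac{q_{n-1}}{p_{n-1}}\,\eta\,\left|\frac{p_{n-1}}{q_{n-1}}-\xi\right|=\eta\left|1-\frac{\xi}{p_{n-1}/q_{n-1}}\right|.$$
The point of this rearrangement is that the factor $\eta$ is a constant, independent of $n$: the \emph{absolute} error of the $\eta$-convergent in a connection equals $\eta$ times the \emph{relative} error $\left|1-\xi/(p_{n-1}/q_{n-1})\right|$ of its $\xi$-partner. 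Whether or not $m\,q_{n-1}/p_{n-1}$ is already in lowest terms is irrelevant here, since the left-hand side depends only on the value $\frac{P_{k-1}}{Q_{k-1}}$.

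Now I would invoke the two monotonicity facts quoted in the introduction. The sequence of relative errors $\left|1-\xi/(p_{n-1}/q_{n-1})\right|$ is strictly decreasing in $n$ (this was established above from $|\xi_n|>1$), while among convergents of $\eta$ the index is governed by the absolute error through the last of the listed equivalences: a smaller index corresponds to a strictly larger value of $\left|\frac{P_{k-1}}{Q_{k-1}}-\eta\right|$. Chaining these through the displayed identity, $n_1<n_2$ forces $\left|1-\xi/(p_{n_1-1}/q_{n_1-1})\right|>\left|1-\xi/(p_{n_2-1}/q_{n_2-1})\right|$, hence $\left|\frac{P_{k_1-1}}{Q_{k_1-1}}-\eta\right|>\left|\frac{P_{k_2-1}}{Q_{k_2-1}}-\eta\right|$, and therefore $k_1<k_2$. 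This is precisely order preservation, so no two connections cross.

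The essential step, and the only place any real content enters, is the algebraic identity of Lemma \ref{zveza}, which I expect to be the crux: it is what collapses the product condition $\frac{p_{n-1}}{q_{n-1}}\cdot\frac{P_{k-1}}{Q_{k-1}}=m$ into the clean proportionality between an $\eta$-side absolute error and a $\xi$-side relative error, after which the two already-proved monotonicities do all the remaining work. The remaining care is purely bookkeeping: confirming strictness, so that distinct connections have distinct indices on both sides, and noting that running the same argument with the roles of $\xi$ and $\eta$ interchanged covers the symmetric case $n_1>n_2$.
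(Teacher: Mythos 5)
Your proof is correct and follows essentially the same route as the paper: both apply Lemma \ref{zveza} to rewrite the absolute error of the $\eta$-side convergent as $\eta$ times the relative error of its $\xi$-side partner, then conclude via the strict monotonicity of relative errors and the equivalence between index order and error size. The only difference is presentational (you phrase it as order preservation of $n\mapsto k$, the paper as a ratio of errors exceeding $1$).
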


\begin{proof}
Let take two connections with convergents $\frac{p}{q}$, $\frac{p'}{q'}$ to $\sqrt[3]{m}$ such that  $|\frac{p}{q}-\sqrt[3]{m}|>|\frac{p'}{q'}-\sqrt[3]{m}|$.
We want to prove that $|m\frac{q}{p}-\sqrt[3]{m^2}|>|m\frac{q'}{p'}-\sqrt[3]{m^2}|$.

Using Lemma \ref{zveza} and decreasing property of relative errors we get
$$\frac{\left| m \frac{q}{p}-\sqrt[3]{m^2} \right|}{\left| m \frac{q'}{p'}-\sqrt[3]{m^2} \right|}
=
\frac{\frac{q}{p} \sqrt[3]{m^2} \left|  \frac{p}{q}-\sqrt[3]{m} \right| }
     {\frac{q'}{p'} \sqrt[3]{m^2} \left|  \frac{p'}{q'}-\sqrt[3]{m} \right| }
= \frac{\left| 1- \frac{\sqrt[3]{m}}{p/q} \right|}{\left| 1- \frac{\sqrt[3]{m}}{p'/q'} \right|}
>1.
$$
\end{proof}

\begin{prop}
Let $\frac{p}{q}$ be convergent to $\sqrt[3]{m}$ or $\sqrt[3]{m^2}$ with partial
quotient $b\ge 2m+1$. Then we have the connection to $m \frac{q}{p}$.
\end{prop}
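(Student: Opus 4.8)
The plan is to verify that the rational number $m\frac{q}{p}$ meets the sufficient condition ``$|\delta|<\tfrac12$'' for being a convergent (the fourth preliminary bullet), applied to the \emph{other} cube root. Since the connection relation $\frac{p}{q}\cdot\frac{P}{Q}=m$ is symmetric in $\xi=\sqrt[3]{m}$ and $\eta=\sqrt[3]{m^2}$, I would treat the two cases in parallel, using the second identity of Lemma \ref{zveza} together with its mirror image obtained by exchanging the roles of $\xi$ and $\eta$. Both identities follow from $\xi\eta=m$ by the same one-line computation; the mirror identity reads $m\frac{q}{p}-\sqrt[3]{m}=-\frac{q}{p}\sqrt[3]{m}\bigl(\frac{p}{q}-\sqrt[3]{m^2}\bigr)$.

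Suppose first that $\frac{p}{q}$ is a convergent to $\xi$ whose following partial quotient is $b\ge 2m+1$, so that by the third preliminary bullet $\frac{p}{q}=\xi+(-1)^{n-1}\frac{|\delta|}{q^2}$ with $|\delta|<\frac1b\le\frac{1}{2m+1}$. I write $m\frac{q}{p}$ in lowest terms as $\frac{P}{Q}$; since $\gcd(p,q)=1$ the only possible cancellation is by $\gcd(m,p)$, so $Q\le p$. Taking absolute values in the second line of Lemma \ref{zveza} gives $\bigl|m\frac{q}{p}-\eta\bigr|\,p^2=\frac{p}{q}\,\eta\,|\delta|$, whence the relevant error for the reduced fraction satisfies $\bigl|\bigl(\tfrac{P}{Q}-\eta\bigr)Q^2\bigr|\le\bigl|m\frac{q}{p}-\eta\bigr|\,p^2=\frac{p}{q}\eta|\delta|$, because $Q\le p$.

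It then remains to bound $\frac{p}{q}\eta|\delta|$ by $\frac12$. Substituting $\frac{p}{q}\le\xi+|\delta|$ (valid since $q\ge1$) and $\xi\eta=m$ yields
$$\frac{p}{q}\eta|\delta|\le m|\delta|+\eta|\delta|^2\le\frac{m}{2m+1}+\frac{\eta}{(2m+1)^2}.$$
As $\frac{m}{2m+1}=\frac12-\frac{1}{2(2m+1)}$, the target inequality reduces to $\frac{\eta}{(2m+1)^2}<\frac{1}{2(2m+1)}$, i.e.\ $2m^{2/3}<2m+1$, which holds for every admissible $m$. Hence $\bigl|\bigl(\tfrac{P}{Q}-\eta\bigr)Q^2\bigr|<\frac12$ and $m\frac{q}{p}$ is a convergent to $\eta$, which is exactly the asserted connection. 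The case of a convergent to $\eta$ is identical, now invoking the mirror identity and the even weaker requirement $2m^{1/3}<2m+1$.

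The computation itself is short once the correct identity is in hand; the two points demanding care are the reduction to lowest terms and the sharpness of the constant. For the former one must confirm that passing from denominator $p$ to the reduced $Q\le p$ only improves the estimate, so that the sufficient-condition bullet genuinely applies to the reduced fraction. For the latter, I expect the main obstacle to be checking that $2m+1$ is precisely the right threshold: the linear term $m|\delta|$ already exhausts nearly all of the budget $\frac12$, leaving only the gap $\frac{1}{2(2m+1)}$, and the bound $2m^{2/3}<2m+1$ is exactly what guarantees the quadratic correction $\eta|\delta|^2$ still fits inside it.
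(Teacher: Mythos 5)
Your proposal is correct and follows essentially the same route as the paper: both verify the sufficient condition $|\delta|<\tfrac12$ for $m\frac{q}{p}$ (reduced, with denominator $Q\le p$) via the second identity of Lemma \ref{zveza} and the bound $\bigl|\frac{p}{q}-\sqrt[3]{m}\bigr|<\frac{1}{bq^2}$ with $b\ge 2m+1$. The only differences are cosmetic --- you arrange the final arithmetic as $\frac{m}{2m+1}+\frac{\eta}{(2m+1)^2}$ where the paper uses $\bigl(m+\frac{m}{2m}\bigr)\frac{1}{2m+1}$, and you write out the mirror identity for the case the paper dismisses as analogous.
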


\begin{proof}
Let $\frac{p}{q}$ be convergent to $\sqrt[3]{m}$.
We have to see that $m \frac{q}{p}$ is convergent to
$\sqrt[3]{m^2}$. Let $\frac{q_1}{p_1}$ be $m \frac{q}{p}$ in its
lowest terms.

\begin{eqnarray*}
\left|\left(\frac{q_1}{p_1}-\sqrt[3]{m^2}\right)p_1^2\right|&\le&
\left|\left(m\frac{q}{p}-\sqrt[3]{m^2}\right)p^2\right|\le
\frac{p}{q} \sqrt[3]{m^2} \left|\frac{p}{q}-\sqrt[3]{m}
\right|q^2\\
&\le& \left(\sqrt[3]{m}+\frac{1}{b q^2}\right)
\sqrt[3]{m^2}\frac{1}{b}< \left(m +\frac{m}{2m
1^2}\right)\frac{1}{2m+1}=\frac{1}{2}
\end{eqnarray*}
We have used Lemma \ref{zveza} and the fact that $\left|
\frac{p}{q}-\sqrt[3]{m} \right| < \frac{1}{b q^2}$.

Proof for $\sqrt[3]{m^2}$ is analogous.
\end{proof}

\begin{lem}\label{parity}
If the triplets
$({p_{n-1}\over q_{n-1}}, \xi_n, b_n)$ and $({P_{k-1}\over
Q_{k-1}},\eta_k,  B_k)$ are connected, then $n$ and $k$ are of different parity: $(-1)^{k-1}=(-1)^n$.
\end{lem}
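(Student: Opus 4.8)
The plan is to extract the sign of each convergent's approximation error from the alternation rule (\ref{sod_lih}) and to let the algebraic identity $\sqrt[3]{m}\cdot\sqrt[3]{m^2}=m$ do all the work. Write $a=\tfrac{p_{n-1}}{q_{n-1}}$ and $b=\tfrac{P_{k-1}}{Q_{k-1}}$ for the two connected convergents. Since $m$ is a noncube with $m\ge 2$, both $\xi=\sqrt[3]{m}$ and $\eta=\sqrt[3]{m^2}$ exceed $1$, every partial quotient is positive, and hence $a,b>0$. The connection hypothesis of Definition \ref{connected} reads $ab=m$, and the key observation is that $m=\sqrt[3]{m}\,\sqrt[3]{m^2}=\xi\eta$; so the product of the two rationals equals the product of the two irrationals they approximate.

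First I would record the sign of each error. By (\ref{sod_lih}) the convergent $a$, which carries index $n-1$ in the expansion of $\xi$, lies below $\xi$ exactly when $n-1$ is even and above $\xi$ exactly when $n-1$ is odd; thus $a-\xi$ has sign $(-1)^{n}$. The identical statement for $\eta$ gives that $b-\eta$ has sign $(-1)^{k}$. Note $n-1,k-1\ge 0$, so (\ref{sod_lih}) applies to the triplets as defined.

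The core step ties these two signs together. Using $ab=\xi\eta$ I would solve $b=\xi\eta/a$ and compute
$$ b-\eta=\frac{\xi\eta}{a}-\eta=\frac{\eta}{a}\,(\xi-a). $$
Because $\eta/a>0$, the sign of $b-\eta$ is opposite to that of $a-\xi$. Combining with the previous paragraph yields $(-1)^{k}=-(-1)^{n}$, equivalently $(-1)^{k-1}=(-1)^{n}$, which is exactly the asserted relation; it states precisely that $n$ and $k$ have opposite parity.

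I do not expect a genuine obstacle here: the argument is a one-line sign computation once the product identity $m=\xi\eta$ is noticed. The only points requiring care are the positivity of $a$ (so that dividing by $a$ preserves signs) and the remark that $a\neq\xi$ and $b\neq\eta$, since a rational never equals these cubic irrationals, so the two errors are strictly nonzero and their signs are well defined.
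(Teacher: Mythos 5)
Your proof is correct and follows essentially the same route as the paper: both arguments combine the alternation rule (\ref{sod_lih}) with the identity $\frac{p_{n-1}}{q_{n-1}}\cdot\frac{P_{k-1}}{Q_{k-1}}=m=\sqrt[3]{m}\,\sqrt[3]{m^2}$ to transfer the side on which one convergent lies to the opposite side for the other. The paper reduces to the case $n$ odd and compares $\frac{P_{k-1}}{Q_{k-1}}$ with $\sqrt[3]{m^2}$ directly, while you phrase the same computation uniformly as a sign identity, which is a purely cosmetic difference.
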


\begin{proof}
It is enough to prove for $n$ is odd. From (\ref{sod_lih})
it follows that
$${p_{n-1}\over q_{n-1}}{P_{k-1}\over Q_{k-1}}=\sqrt[3]{m} \, \sqrt[3]{m^2} >  {p_{n-1}\over q_{n-1}} \sqrt[3]{m^2}.$$
So we have $\sqrt[3]{m^2} < {P_{k-1}\over Q_{k-1}}$ and $k$ is even.

\end{proof}

\begin{thm} \label{izrek}
Let  $({p_{n-1}\over q_{n-1}}, \xi_n, b_n)$
and $({P_{k-1}\over Q_{k-1}}, \eta_k, B_k)$ be connected. Then
there exist natural numbers $r$, $s$, such that $rs=m$ and
\begin{equation}\label{glavno} -2r+2\le rb_n-sB_k\le 2s-2.
\end{equation}
\end{thm}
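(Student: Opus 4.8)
The plan is to read $r$ and $s$ straight off the connection, then to turn the connection into an exact proportionality between the two scaled errors and feed the two-sided estimate $\frac{1}{b+2}<|\delta|<\frac{1}{b}$ into it.

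First I would unwind the hypothesis $\frac{p_{n-1}}{q_{n-1}}\cdot\frac{P_{k-1}}{Q_{k-1}}=m$. Cross-multiplying gives $p_{n-1}P_{k-1}=m\,q_{n-1}Q_{k-1}$, and since both convergents are in lowest terms by (\ref{pqpq}), the coprimality relations $\gcd(p_{n-1},q_{n-1})=\gcd(P_{k-1},Q_{k-1})=1$ force $Q_{k-1}\mid p_{n-1}$ and $q_{n-1}\mid P_{k-1}$. Setting $r=p_{n-1}/Q_{k-1}$ and $s=P_{k-1}/q_{n-1}$ then yields $p_{n-1}=rQ_{k-1}$, $P_{k-1}=sq_{n-1}$ and, after substituting back, $rs=m$. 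This produces the required factorization; note that $r,s$ are not free but are pinned down by the connection.

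Next I would feed this into Lemma \ref{zveza}. Writing the scaled errors $\delta_\xi=(\frac{p_{n-1}}{q_{n-1}}-\xi)q_{n-1}^2$ and $\delta_\eta=(\frac{P_{k-1}}{Q_{k-1}}-\eta)Q_{k-1}^2$ and using $p_{n-1}=rQ_{k-1}$ together with $m\frac{q_{n-1}}{p_{n-1}}=\frac{P_{k-1}}{Q_{k-1}}$, the second identity of Lemma \ref{zveza} (applied with $p=p_{n-1}$, $q=q_{n-1}$) collapses to the clean proportionality $r^2\delta_\eta=-\frac{p_{n-1}}{q_{n-1}}\,\eta\,\delta_\xi$. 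The crucial point is that the coefficient $\frac{p_{n-1}}{q_{n-1}}\eta$ equals $m$ up to a term of size $O(q_{n-1}^{-2})$, because $\frac{p_{n-1}}{q_{n-1}}-\xi=\delta_\xi/q_{n-1}^2$ and $\xi\eta=m$; combined with $rs=m$ this says that $r|\delta_\eta|$ and $s|\delta_\xi|$ coincide up to a negligible correction. Lemma \ref{parity} fixes $\delta_\xi$ and $\delta_\eta$ to have opposite signs, which determines the direction of that correction.

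Finally I would insert the two-sided estimates $\frac{1}{b_n+2}<|\delta_\xi|<\frac{1}{b_n}$ and $\frac{1}{B_k+2}<|\delta_\eta|<\frac{1}{B_k}$ into $r^2|\delta_\eta|=\frac{p_{n-1}}{q_{n-1}}\eta\,|\delta_\xi|$. Setting $K=\frac{p_{n-1}}{q_{n-1}}\eta/r^2$, the four one-sided comparisons give $-2<b_n-KB_k<2K$; multiplying by $r$ and replacing $rK$ by $s$ (legitimate up to the $O(q_{n-1}^{-2})$ error above, whose sign is known from Lemma \ref{parity}) sandwiches the integer $rb_n-sB_k$ strictly between $-2r$ and $2s$, which already reproduces the shape of (\ref{glavno}). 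The main obstacle is sharpening these two easy endpoints to the claimed $-2r+2$ and $2s-2$: the one-sided estimates alone only exclude the extreme integer values on one side, so ruling out the remaining boundary cases seems to require more than the independent bounds $0<\{\xi_n\}<1$ and $0<q_{n-2}/q_{n-1}<1$ on the two tails. I expect the decisive step to be correlating the tail data of the two expansions through the connection (or an integrality/exclusion argument backed by the non-intersection of connections) so that the slack in $b_n$ and the slack in $B_k$ cannot both run to their extremes at once.
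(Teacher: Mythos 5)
Your construction of $r$ and $s$ is exactly the paper's, and your instinct at the end is right: the approach through the scaled errors $\delta_\xi,\delta_\eta$ and the two-sided estimate $\tfrac1{b+2}<|\delta|<\tfrac1b$ cannot reach the stated endpoints. At best it sandwiches the integer $rb_n-sB_k$ strictly between $-2r$ and $2s$, i.e.\ gives $-2r+1\le rb_n-sB_k\le 2s-1$, and the ``correlation of tail data'' you correctly suspect is needed to shave off the last unit on each side is precisely the step you do not supply. So the proposal as written proves a weaker inequality than (\ref{glavno}).

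The missing idea is the paper's central observation: the quantity $r\xi_n-s\eta_k$ (complete quotients, not partial quotients) is an \emph{exact integer}. One sees this by writing both complete quotients via (\ref{copml_quot}), using $p_{n-1}=rQ_{k-1}$, $P_{k-1}=sq_{n-1}$ to put them over a common denominator, and then noticing that $(rq_{n-2},rp_{n-2})$ and $(P_{k-2},mQ_{k-2})$ both solve the linear Diophantine equation $p_{n-1}x-q_{n-1}y=(-1)^nr$ (the second via (\ref{pqpq}) and the parity Lemma \ref{parity}); hence they differ by $t(q_{n-1},p_{n-1})$ for some $t\in\ZZ$, and substitution shows $r\xi_n-s\eta_k=t$. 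The elementary bounds $P_{k-2}<P_{k-1}$ and $Q_{k-2}<Q_{k-1}$ give $-r<t<s$, hence $-r+1\le r\xi_n-s\eta_k\le s-1$ by integrality. Writing $\xi_n=b_n+\tfrac{i+\epsilon_1}{r}$ and $\eta_k=B_k+\tfrac{j+\epsilon_2}{s}$ with $0\le i\le r-1$, $0\le j\le s-1$, $0<\epsilon_1,\epsilon_2<1$ then yields $-2r+1<rb_n-sB_k<2s-1$, and a second application of integrality gives (\ref{glavno}). Your route replaces this exact arithmetic identity by an approximate proportionality with an $O(q_{n-1}^{-2})$ correction, which both introduces an error term you would still need to control uniformly in $B_k$ and, more importantly, loses the one unit of integrality on each side that the theorem requires.
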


\begin{proof}
Because convergents ${p_{n-1}\over q_{n-1}}$, ${P_{k-1}\over
Q_{k-1}}$ are reduced, it follows from Definition \ref{connected}
that
\begin{equation}\label{rs}
r:={p_{n-1}\over Q_{k-1}},\qquad s:={P_{k-1}\over q_{n-1}}
\end{equation}
are natural numbers.

Using (\ref{copml_quot}) and (\ref{rs}) we get
\begin{eqnarray*}
r\xi_n-s\eta_k&=&-r{p_{n-2}-q_{n-2}\sqrt[3]{m} \over p_{n-1}-q_{n-1}\sqrt[3]{m}
}+s{P_{k-2}-Q_{k-2}\sqrt[3]{m^2} \over P_{k-1}-Q_{k-1}\sqrt[3]{m^2} }\\
&=& {-r p_{n-2}+r q_{n-2}\sqrt[3]{m} \over r Q_{k-1}- q_{n-1}\sqrt[3]{m}}+{s
P_{k-2}-s Q_{k-2}\sqrt[3]{m^2} \over s q_{n-1}-Q_{k-1}\sqrt[3]{m^2}}
\end{eqnarray*}

To obtain the same denominator, the second fraction is extended
with $\sqrt[3]{m}$ and the factor $s$ divided out
\begin{eqnarray}\label{dd}\nonumber
r\xi_n-s\eta_k&=&{-r p_{n-2}+rq_{n-2}\sqrt[3]{m} \over r Q_{k-1}-
q_{n-1}\sqrt[3]{m}}+{P_{k-2}\sqrt[3]{m} -m Q_{k-2}\over q_{n-1}\sqrt[3]{m} -r
Q_{k-1}}\\ &=&{r(sQ_{k-2}-p_{n-2})-(P_{k-2}-rq_{n-2})\sqrt[3]{m} \over
rQ_{k-1}- q_{n-1}\sqrt[3]{m}}
\end{eqnarray}

Let us  solve the linear diophantine equation
\begin{equation}\label{diof_en}
p_{n-1}x-q_{n-1}y=(-1)^n r.
\end{equation}
From (\ref{pqpq}) it follows that
$p_{n-1} q_{n-2}-q_{n-1}p_{n-2}=(-1)^{n}$.
Hence  one of the solution of (\ref{diof_en}) is
$(x_0,y_0)=(rq_{n-2}, rp_{n-2})$.

Let us prove that
$(x_1, y_1)=(P_{k-2}, mQ_{k-2})$ is also the solution of (\ref{diof_en})
using (\ref{rs}), (\ref{pqpq}) and Lemma \ref{parity} we get
\begin{eqnarray*}
p_{n-1}x_1-q_{n-1}y_1&=& p_{n-1} P_{k-2}-q_{n-1} m Q_{k-2}\\
                     &=& r Q_{k-1} P_{k-2} - \frac{P_{k-1}}{s} r s  Q_{k-2}\\
                     &=& -r (P_{k-1} Q_{k-2} -Q_{k-1} P_{k-2})\\
                     &=& (-1)^{k-1} r\\
                     &=& (-1)^n r\\
\end{eqnarray*}
Since $p_{n-1}$ and $q_{n-1}$ are coprime, the
general solution of (\ref{diof_en}) can be written as
$$(x,y)=(x_0,y_0)+t(q_{n-1},p_{n-1}), \quad t\in \ZZ.$$
Hence
$$(P_{k-2}, mQ_{k-2})=(rq_{n-2}, rp_{n-2})+t(q_{n-1},p_{n-1})$$
and the parameter $t$ can be expressed in two forms
\begin{equation}\label{t}
t={P_{k-2}-rq_{n-2}\over q_{n-1}}={mQ_{k-2}-rp_{n-2}\over p_{n-1}} \in \ZZ.
\end{equation}

Using these two forms in the equation (\ref{dd}) it follows that
$r\xi_n-s\eta_k=t$. On the other hand from (\ref{t}) and (\ref{rs}) we can estimate
$$t={P_{k-2}-rq_{n-2}\over q_{n-1}} < {P_{k-1}-rq_{n-2}\over q_{n-1}}= {sq_{n-1}-rq_{n-2}\over q_{n-1}}<s$$
and
$$t={mQ_{k-2}-rp_{n-2}\over p_{n-1}} > {mQ_{k-2}-rp_{n-1}\over p_{n-1}}= {r s Q_{k-2}-r^2 Q_{k-1}\over r Q_{k-1}}>-r.$$
Since $t\in \ZZ$ it follows
\begin{equation}\label{neenacba}
-r+1\le r\xi_n-s\eta_k \le s-1.
\end{equation}
We decompose the complete quotients into their integral and
fractional parts:
$$\xi_n=\lfloor\xi_n\rfloor + (\xi_n),\qquad {i\over r}<(\xi_n)<{i+1\over r},\qquad 0\le i\le r-1$$
$$\eta_k=\lfloor\eta_k\rfloor + (\eta_k),\qquad {j\over s}<(\eta_k)<{j+1\over s},\qquad 0\le j\le s-1$$
If we use this decomposition in the equation (\ref{neenacba}), we
get
$$
-r+1\le r b_n +i + \epsilon_1 - s B_k -j - \epsilon_2\le s-1
$$
$$
-2r+2+\epsilon_2-\epsilon_1\le r b_n - s B_k \le
2s-2+\epsilon_2-\epsilon_1
$$
where $0<\epsilon_1, \epsilon_2<1$ and $\epsilon_2-\epsilon_1$ is
between -1 and 1. Because all other numbers are integers, we
finaly get
$$-2r+2\le rb_n-sB_k\le 2s-2.$$
\end{proof}

\begin{figure}[!htbp]
\centering
\includegraphics[width=130mm]{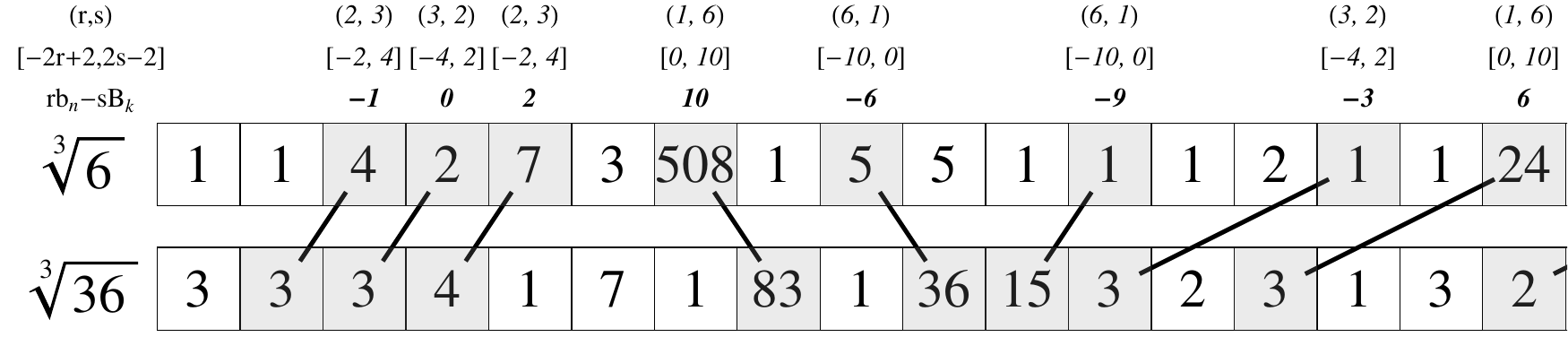}
\caption{Continued fraction ladder of $(\sqrt[3]{6},\sqrt[3]{36})$}\label{fig2}
\end{figure}

\begin{cor}
If in the above Theorem \ref{izrek} prime  $m$ is taken, then the ratio between the
connected partial quotients is roughly $m$ and the biggest one is at least $m$.
\end{cor}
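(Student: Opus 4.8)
The plan is to specialize Theorem~\ref{izrek} to the case where $m$ is prime, so that the only factorizations $rs=m$ are $(r,s)=(1,m)$ and $(r,s)=(m,1)$. I would first invoke Lemma~\ref{parity} to recall that connected triplets have opposite parity, and then use the inequality (\ref{glavno}) in each of these two cases separately. The key observation is that when $m$ is prime the bounds $-2r+2\le rb_n-sB_k\le 2s-2$ become very asymmetric, forcing one partial quotient to be roughly $m$ times the other.

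First I would treat the case $(r,s)=(1,m)$. Here (\ref{glavno}) reads $0\le b_n-mB_k\le 2m-2$, which gives $mB_k\le b_n\le mB_k+2m-2$, so $b_n/B_k$ lies between $m$ and $m+(2m-2)/B_k$; thus the ratio is $m$ up to a correction that is small when $B_k$ is not tiny, and in particular $b_n\ge mB_k\ge m$ since $B_k\ge 1$. Symmetrically, in the case $(r,s)=(m,1)$ the inequality (\ref{glavno}) becomes $-2m+2\le mb_n-B_k\le 0$, so $B_k\ge mb_n\ge m$ and again the ratio $B_k/b_n$ is essentially $m$. In both cases the larger of the two connected partial quotients is bounded below by $m$, and the ratio of the larger to the smaller is $m$ up to an additive term controlled by the smaller quotient.

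To make ``roughly $m$'' precise I would note that since one of $r,s$ equals $1$ and the other equals $m$, the relevant denominator in the ratio estimate is the partial quotient multiplied by $m$, so the relative error in the approximation ``ratio $\approx m$'' is at most $(2m-2)/(mB_k)<2/B_k$ (respectively $<2/b_n$), which tends to $0$ as the partial quotients grow. This is exactly the sense in which a big partial quotient in one sequence is mirrored by one roughly $m$ times smaller in the other, recovering for prime $m$ the ``factor of two'' phenomenon observed for $(\sqrt[3]{2},\sqrt[3]{4})$ in Figure~\ref{fig1}.

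The only real subtlety, and the step I would be most careful about, is the claim that the \emph{biggest} connected partial quotient is at least $m$: this requires checking that in each factorization it is genuinely the larger quotient that inherits the lower bound $m$, i.e.\ that the asymmetry in (\ref{glavno}) always pushes the bound onto the same side as the factor $m$. Since the two cases $(1,m)$ and $(m,1)$ are symmetric under swapping the roles of $\xi$ and $\eta$, this amounts to a direct reading of the two one-sided inequalities above, and no genuinely hard estimate is involved; the corollary is essentially an immediate specialization of the theorem once the prime hypothesis collapses the set of admissible $(r,s)$.
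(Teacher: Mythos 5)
Your argument is correct and is precisely the direct specialization the paper intends (the corollary is stated without proof as an immediate consequence of Theorem~\ref{izrek}): primality forces $(r,s)\in\{(1,m),(m,1)\}$, and the two one-sided readings of (\ref{glavno}) give $mB_k\le b_n\le mB_k+2m-2$ or $mb_n\le B_k\le mb_n+2m-2$, whence the larger quotient is at least $m$ and the ratio is $m$ up to an error of at most $(2m-2)$ divided by the smaller quotient times $m$. Nothing is missing; the appeal to Lemma~\ref{parity} is not actually needed for this deduction, but that is harmless.
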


\begin{lem}\label{rssr}
In the case of consecutive connections the role of $r$, $s$  in Theorem \ref{izrek} is interchanged
(see Figure \ref{fig2}).
\end{lem}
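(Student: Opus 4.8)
The plan is to encode the relation between the complete quotients $\xi_n$ and $\eta_k$ of a connection as a single integer M\"obius matrix and to follow how that matrix evolves as both continued fractions are advanced to the next connection. Writing the relation $r\xi_n-s\eta_k=t$ of Theorem \ref{izrek} as $\eta_k=\frac{r\xi_n-t}{s}$, a connection corresponds to the matrix $G=\begin{pmatrix} r & -t\\ 0 & s\end{pmatrix}$, which is upper triangular, has $\det G=rs=m$, and whose diagonal is exactly the factorization $(r,s)$ of (\ref{rs}). In these terms the lemma asserts precisely that the matrix attached to the next connection has diagonal $(s,r)$. Note also that solving for $\xi_n$ gives $\xi_n=\frac{s\eta_k+t}{r}$, i.e. the matrix $\begin{pmatrix} s & t\\ 0 & r\end{pmatrix}$ with diagonal $(s,r)$, so the \emph{same} connection read from the $\eta$--side already carries the interchanged pair.

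First I would record how $G$ transforms under one step of each continued fraction. Since $\xi_n=b_n+1/\xi_{n+1}$ and $\eta_k=B_k+1/\eta_{k+1}$, advancing both indices replaces $G$ by $\begin{pmatrix} 0 & 1\\ 1 & -B_k\end{pmatrix}G\begin{pmatrix} b_n & 1\\ 1 & 0\end{pmatrix}$. Carrying out this product with $G=\begin{pmatrix} r & -t\\ 0 & s\end{pmatrix}$ yields $\begin{pmatrix} s & 0\\ c' & r\end{pmatrix}$ with $c'=rb_n-sB_k-t$. The decisive observation is that a single joint step already interchanges the diagonal to $(s,r)$ and moves the vanishing entry from the lower--left to the upper--right corner; thus the instant one leaves a connection, the data passes into the mirror situation in which $r$ and $s$ have exchanged roles. (As a check, at the $(\sqrt[3]{2},\sqrt[3]{4})$ rung with $(r,s,t)=(1,2,1)$ one has $c'=0$ and the matrix becomes the diagonal $\begin{pmatrix} 2&0\\0&1\end{pmatrix}$, which is the next rung.)

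The next connection is, by Definition \ref{connected} together with the non-intersection property proved above, which linearly orders the ladder, the first later index pair $(n',k')$ at which the relating matrix is again upper triangular, i.e. has lower-left entry $0$. Its determinant is $\pm m$, and Lemma \ref{parity} forces an even number of intervening steps, hence determinant $+m$; being triangular, its diagonal entries multiply to $m$. It remains to show that this diagonal is the swapped pair $(s,r)$ rather than some other factorization of $m$. Equivalently, feeding the definitions (\ref{rs}) of both connections into the claim, one must establish the two cross identities $p_{n'-1}\,q_{n-1}=P_{k-1}\,Q_{k'-1}$ and $P_{k'-1}\,Q_{k-1}=p_{n-1}\,q_{n'-1}$, which restate $r'=s$ and $s'=r$.

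The main obstacle is exactly this last point: controlling the block of non-connection steps between the two rungs and proving that re-triangularization reinstates $s$ in the upper-left corner. I intend to exploit the symmetry exposed in the second paragraph: once the matrix has entered the form $\begin{pmatrix} s & 0\\ c' & r\end{pmatrix}$, the dynamics is the transpose of the dynamics issuing from a connection with parameters $(s,r)$ (the elementary factors $\begin{pmatrix} b & 1\\ 1 & 0\end{pmatrix}$ and $\begin{pmatrix} 0 & 1\\ 1 & -B\end{pmatrix}$ are symmetric, so transposing the transfer $T_{B}^{-1}GT_{b}$ merely swaps the two corners and exchanges the $\xi$-- and $\eta$--sides). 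Hence the mirror of Theorem \ref{izrek} should apply to this block and deliver diagonal $(s,r)$ at the first return to upper-triangular form. The quantitative input that keeps the intermediate entries from producing a spurious refactorization is the estimate $-2r+2\le rb_n-sB_k\le 2s-2$ of (\ref{glavno}), which bounds the off-diagonal coefficient $c'$ throughout; verifying that these bounds force the return to the single triangular shape with diagonal $(s,r)$ is the step I expect to require the most care.
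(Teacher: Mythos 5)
Your one-step matrix computation is correct as far as it goes --- indeed $\bigl(\begin{smallmatrix} 0 & 1\\ 1 & -B_k\end{smallmatrix}\bigr)\bigl(\begin{smallmatrix} r & -t\\ 0 & s\end{smallmatrix}\bigr)\bigl(\begin{smallmatrix} b_n & 1\\ 1 & 0\end{smallmatrix}\bigr)=\bigl(\begin{smallmatrix} s & 0\\ c' & r\end{smallmatrix}\bigr)$ with $c'=rb_n-sB_k-t$ --- but the proposal stops exactly where the lemma begins. You yourself defer the decisive step (that the first return to triangular form has diagonal $(s,r)$ rather than another factorization of $m$) to future work; since that step \emph{is} the assertion of the lemma, what you have is a plan, not a proof. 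You are also aiming at a different statement than the paper proves: in the paper, ``consecutive connections'' means connections at \emph{adjacent convergent indices} in both expansions, i.e.\ the hypotheses are $\frac{p_{n-1}}{q_{n-1}}\frac{P_{k-1}}{Q_{k-1}}=m$ \emph{and} $\frac{p_{n-2}}{q_{n-2}}\frac{P_{k-2}}{Q_{k-2}}=m$ (compare the next lemma, which speaks of connections between $b_{n-1},b_n,b_{n+1}$ and $B_{k-1},B_k,B_{k+1}$). You instead take the next rung of the ladder wherever it occurs, separated by an arbitrary block of non-connection steps; that is a strictly stronger claim which the paper neither makes nor needs, and the transposition-symmetry heuristic you offer for controlling that block is not an argument. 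A further unproved identification is buried in your setup: Theorem \ref{izrek} shows that a connection \emph{produces} an upper-triangular relation, but you treat upper-triangularity of the accumulated product matrix as synonymous with the existence of a connection, and you identify its diagonal with the $(r',s')$ of (\ref{rs}) without showing the product matrix is proportional to the canonical one.

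For the adjacent-index case your computation could in fact be closed, by a route different from the paper's: if $(n+1,k+1)$ is also a connection, Theorem \ref{izrek} supplies a second matrix $\bigl(\begin{smallmatrix} r' & -t'\\ 0 & s'\end{smallmatrix}\bigr)$ relating $\xi_{n+1}$ and $\eta_{k+1}$; two integer M\"obius maps agreeing at the cubic irrational $\xi_{n+1}$ must be proportional (otherwise $\xi_{n+1}$ would satisfy a quadratic over $\QQ$), which forces $c'=0$ and then, comparing determinants and signs, $(r',s')=(s,r)$. The paper's own proof avoids matrices entirely: substituting $p_{n-1}=rQ_{k-1}$, $q_{n-1}=P_{k-1}/s$, $p_{n-2}=r_1Q_{k-2}$, $q_{n-2}=P_{k-2}/s_1$ into $p_{n-1}q_{n-2}-p_{n-2}q_{n-1}=(-1)^n$ makes both terms carry the common coefficient $m/(ss_1)$, so that $Q_{k-1}P_{k-2}-Q_{k-2}P_{k-1}=(-1)^n\,ss_1/m$; comparing with (\ref{pqpq}) for the $\eta$-expansion via Lemma \ref{parity} gives $ss_1=m$, hence $s_1=r$ and $r_1=s$. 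You should either restrict to adjacent indices and finish with the proportionality step, or actually carry out the missing analysis of the intermediate block.
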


\begin{proof}
From assumptions we have
\begin{eqnarray*}
{p_{n-1}\over q_{n-1}}{P_{k-1}\over Q_{k-1}} =m= r s & \qquad
r={p_{n-1}\over Q_{k-1}} & \quad s={P_{k-1}\over q_{n-1}}\\
{p_{n-2}\over q_{n-2}}{P_{k-2}\over Q_{k-2}} =m=r_1 s_1 & \quad
 r_1={p_{n-2}\over Q_{k-2}} & \quad s_1={P_{k-2}\over q_{n-2}}
\end{eqnarray*}
We want to prove that $r_1=s$ and $s_1=r$.

\noindent
From equation (\ref{pqpq})
$$p_{n-1} q_{n-2}-p_{n-2}q_{n-1} =(-1)^n$$
we get
$$\frac{r Q_{k-1} P_{k-2}}{s_1} - \frac{r_1 Q_{k-2} P_{k-1}}{s}=(-1)^n$$
$$ Q_{k-1} P_{k-2} - Q_{k-2} P_{k-1} = (-1)^n \frac{ s s_1}{m}.$$

\noindent
From equation (\ref{pqpq}) and Lemma \ref{parity} it follows $s s_1= m$.
\end{proof}

\begin{lem}
If there are three consecutive connections between  $b_{n-1}$, $b_n$, $b_{n+1}$ and  $B_{k-1}$, $B_k$, $B_{k+1}$,  then  $rb_n-sB_k=0$ for the middle one connection (see Figure \ref{fig1} and \ref{fig2}).
\end{lem}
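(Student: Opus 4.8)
The plan is to apply the identity underlying Theorem \ref{izrek} to all three connections simultaneously and to exploit the interchange of $r,s$ furnished by Lemma \ref{rssr}. Set $\theta=\sqrt[3]{m}$, so that $\sqrt[3]{m^2}=\theta^2$ and $\theta^3=m$. Write the middle parameters $r=p_{n-1}/Q_{k-1}$, $s=P_{k-1}/q_{n-1}$ as in \eqref{rs}, so $rs=m$; by Lemma \ref{rssr} the two neighbouring connections have $r,s$ interchanged, whence
$$
p_{n-2}=sQ_{k-2},\quad P_{k-2}=rq_{n-2},\qquad p_{n-1}=rQ_{k-1},\quad P_{k-1}=sq_{n-1},\qquad p_{n}=sQ_{k},\quad P_{k}=rq_{n}.
$$
The identity $r\xi_n-s\eta_k=t\in\ZZ$ from the proof of Theorem \ref{izrek}, applied to each connection (with the interchange), gives three integers
\begin{equation*}
s\xi_{n-1}-r\eta_{k-1}=t_{-1},\qquad r\xi_n-s\eta_k=t,\qquad s\xi_{n+1}-r\eta_{k+1}=t_{+1}.
\end{equation*}
Put $D=rb_n-sB_k$ (the quantity to be shown zero) and $D_{-1}=sb_{n-1}-rB_{k-1}$.

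Next I would feed the recurrence \eqref{ksi}, in the form $\xi_{j-1}=b_{j-1}+1/\xi_j$ and $\eta_{l-1}=B_{l-1}+1/\eta_l$, into the outer two relations. Substituting $\xi_{n-1}=b_{n-1}+1/\xi_n$, $\eta_{k-1}=B_{k-1}+1/\eta_k$ into the first relation and multiplying through by $\xi_n\eta_k$ gives $s\eta_k-r\xi_n=(t_{-1}-D_{-1})\xi_n\eta_k$, that is,
\begin{equation*}
t=(D_{-1}-t_{-1})\,\xi_n\eta_k .
\end{equation*}
Likewise, substituting $\xi_n=b_n+1/\xi_{n+1}$, $\eta_k=B_k+1/\eta_{k+1}$ into the middle relation and multiplying by $\xi_{n+1}\eta_{k+1}$ gives
\begin{equation*}
t_{+1}=(D-t)\,\xi_{n+1}\eta_{k+1}.
\end{equation*}
In each case an integer equals an integer multiple of a product of two complete quotients; as soon as that product is irrational, the integer coefficient must vanish, yielding first $t=0$ (hence $D_{-1}=t_{-1}$) and then $D=t=0$, which is the assertion.

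The heart of the matter, and the step I expect to be the main obstacle, is therefore proving that $\xi_n\eta_k$ and $\xi_{n+1}\eta_{k+1}$ are irrational. Here I would use \eqref{copml_quot}, writing $\xi_{n+1}=-(p_{n-1}-q_{n-1}\theta)/(p_n-q_n\theta)$ and $\eta_{k+1}=-(P_{k-1}-Q_{k-1}\theta^2)/(P_k-Q_k\theta^2)$, and substitute the connection relations above. Reducing numerator and denominator in the basis $1,\theta,\theta^2$ (using $\theta^3=m$ and $rs=m$) collapses them to
\begin{equation*}
\xi_{n+1}\eta_{k+1}=\frac{2mq_{n-1}Q_{k-1}-sq_{n-1}^2\theta-rQ_{k-1}^2\theta^2}{2mq_nQ_k-rq_n^2\theta-sQ_k^2\theta^2}.
\end{equation*}
Setting this equal to a rational $\mu$ and comparing the constant and $\theta$ coefficients forces the single relation $sq_{n-1}Q_k=rq_nQ_{k-1}$; but $p_n=sQ_k$ and $p_{n-1}=rQ_{k-1}$ turn the difference of the two sides into $p_nq_{n-1}-p_{n-1}q_n=(-1)^{n-1}\neq0$ by \eqref{pqpq}, a contradiction. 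The same computation one index lower (now using the earlier and middle connections) reduces the rationality of $\xi_n\eta_k$ to $rq_{n-2}Q_{k-1}=sq_{n-1}Q_{k-2}$, impossible because the difference equals $p_{n-1}q_{n-2}-p_{n-2}q_{n-1}=(-1)^{n}$. With both products irrational, the two displayed relations close the argument and give $rb_n-sB_k=0$.
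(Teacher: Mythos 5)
Your proof is correct, but it reaches the two key vanishing statements by a genuinely different mechanism than the paper. The paper also starts from Lemma \ref{rssr}, but then goes back to the closed form (\ref{dd}) for $r\xi_n-s\eta_k$: the previous connection forces $p_{n-2}=sQ_{k-2}$ and $P_{k-2}=rq_{n-2}$, so the numerator of (\ref{dd}) vanishes identically and $r\xi_n-s\eta_k=0$ falls out at once; the same argument one step up gives $s\xi_{n+1}-r\eta_{k+1}=0$, and a single application of the recurrence (\ref{ksi}) then yields $rb_n-sB_k=0$. You instead use only the integrality of the three quantities $t_{-1},t,t_{+1}$ from Theorem \ref{izrek}, chain them through the recurrence, and kill the integer coefficients by showing that $\xi_n\eta_k$ and $\xi_{n+1}\eta_{k+1}$ are irrational --- an explicit computation in the basis $1,\theta,\theta^2$ that I checked and that correctly reduces to (\ref{pqpq}). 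The paper's route is shorter and avoids any irrationality computation, since the algebraic identity (\ref{dd}) does the work; your route costs an extra lemma but uses less of the internal structure of the theorem's proof (only ``$r\xi_n-s\eta_k\in\ZZ$'' rather than the specific fraction (\ref{dd})), and it yields the additional by-products $t_{-1}=D_{-1}$ and $t_{+1}=0$. Both arguments are sound; if you write yours up, state the irrationality of the products of complete quotients as a separate lemma, since it is the only nontrivial new ingredient.
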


\begin{proof}
From assumptions we get using notation (\ref{rs}) from Theorem \ref{izrek} for three consecutive indexes
$$
r_{-1}={p_{n-2}\over Q_{k-2}},\,s_{-1}={P_{k-2}\over q_{n-2}},\,
r={p_{n-1}\over Q_{k-1}},\, s={P_{k-1}\over q_{n-1}},\,
r_1={p_{n}\over Q_{k}},\, s_1={P_{k}\over q_{n}}.
$$
From Lemma \ref{rssr} it follows
$$ r_{-1}=s=r_1, \qquad s_{-1}=r=s_1.$$
We get
\begin{equation}\label{alfa}
r\xi_n-s\eta_k = 0
\end{equation}
using
$$
sQ_{k-2}-p_{n-2} = r_{-1} Q_{k-2}-p_{n-2} = 0,  \;
P_{k-2}-rq_{n-2} = P_{k-2}-s_{-1} q_{n-2} = 0
$$
and (\ref{dd}).
Similarly we have
\begin{equation}\label{beta}
r_1 \xi_{n+1}-s_1 \eta_{k+1} = 0.
\end{equation}
In (\ref{beta}) we use definition of complete quotients (\ref{ksi}) and get
$$
\frac{s}{\xi_n - b_n} - \frac{r}{\eta_k - B_k}=0.
$$
After multiplying with denominators and using (\ref{alfa})
we get the result.
\end{proof}

\begin{rem}
Let us take the ladder of $(\sqrt[3]{2},\sqrt[3]{4})$ with length 1000.
In Figure \ref{fig3} we can see the difference $n-k$ for positions of 665 ladder connections.

\begin{figure}[!htbp]
\centering
\includegraphics[width=80mm]{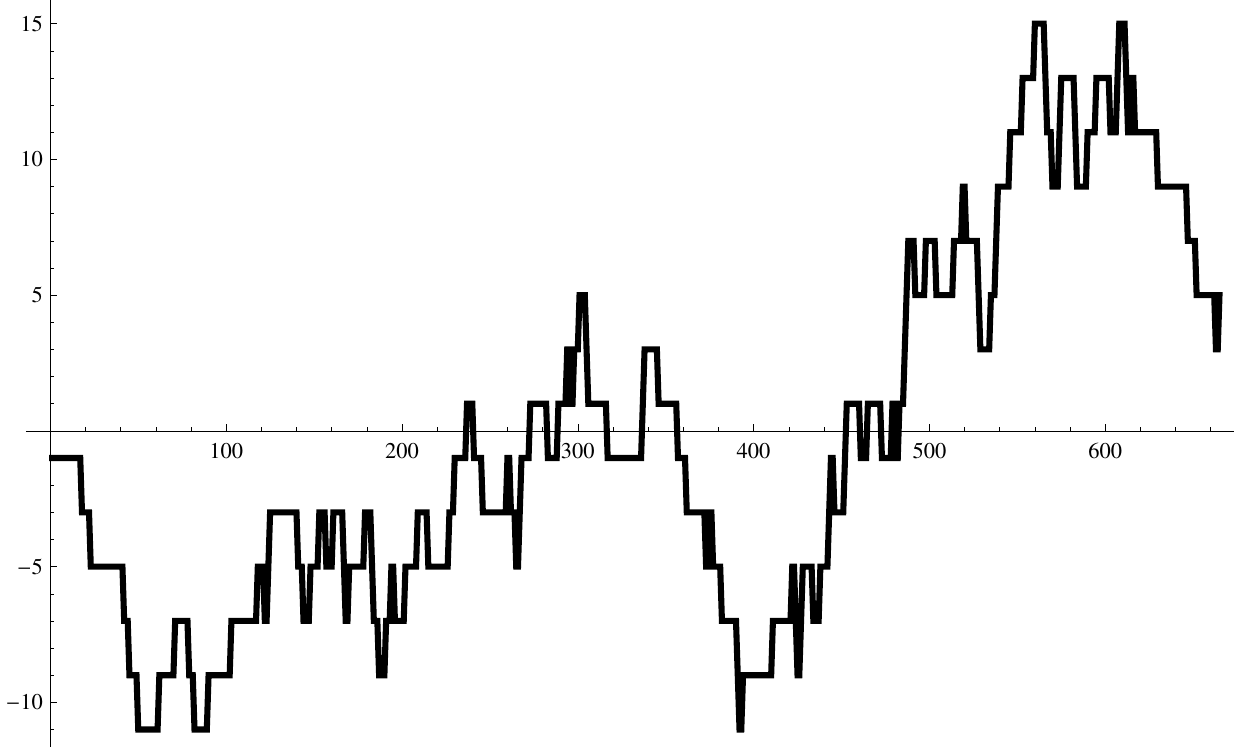}
\caption{$n-k$ for ladder of $(\sqrt[3]{2},\sqrt[3]{4})$ with  length 1000}\label{fig3}
\end{figure}
\end{rem}

\end{document}